\newcommand{\ca}{{\mathcal{K}}_0^n}
\newcommand{\ds}{\displaystyle}
\newcommand{\ii}{\infty}
\newcommand{\fp}{\left(\frac{h_K}{h_L}\right)}
\newcommand{\intl}{\int_{\mathbb{S}^{n-1}}}
\newcommand{\s}{\mathbb{S}^{n-1}}
\newcommand{\dm}{d\mu_{\mathbb{S}^{n-1}}}
\newtheorem{theorem}{Theorem}[section]
\newtheorem{proposition}{Proposition}[section]
\newtheorem{corollary}{Corollary}[section]
\begin{document}

\title[A note on Bourgain-Milman's universal constant]
 {A note on Bourgain-Milman's universal constant}

\author[A. Stancu]{Alina Stancu${}^{1}$}
\address{Department of Mathematics and Statistics,
  Concordia University, Montreal, QC, Canada, H3G 1M8}
\email{alina.stancu@concordia.ca}
\subjclass[2010]{Primary 52A40; Secondary 52A20.}

\thanks{${}^{1}$ Partially supported
by an NSERC Discovery grant.}

\dedicatory{}

\begin{abstract}
The present note is a result of an on-going investigation into the logarithmic Brunn-Minkowski inequality. We obtain lower estimates on the volume product for convex bodies in $\mathbb{R}^n$ not necessarily symmetric with respect to the origin from a modified logarithmic Brunn-Minkowski inequality.
\end{abstract}

\maketitle

\section{Introduction}

With extraordinary implications which continue to be seen, the classical Brunn-Minkowski theory of convex bodies was placed in a larger theory by Lutwak's $L_p$-Minkowski problem \cite{Lutwak1, Lutwak2}. Consequently, many classical results for convex bodies became part of the extended $L_p$ Brunn-Minkowski-Firey theory, while many other results of the extended theory bring new and original insight in convex geometric analysis. One such strikingly new behavior is due to the $L_0$-Minkowski problem \cite{BLYZ1}, \cite{Stancu1}--\cite{Stancu3}, and its version of the Brunn-Minkowski inequality known for technical reasons as the logarithmic Brunn-Minkowski inequality \cite{BLYZ2}. The present note is a result of an on-going investigation into the logarithmic Brunn-Minkowski inequality.

Proved for $n=2$ by B\"or\"oczky, Lutwak, Yang and Zhang \cite{BLYZ2}, and conjectured by them for all $n$'s where certain cases are known to hold \cite{Saroglou}, \cite{Stancu4}, the logarithmic Brunn-Minkowski inequality states the following.

\bigskip

\noindent {\bf{The Logarithmic Brunn-Minkowski Inequality.}} {\em
Let $K$ and $ L$ be convex bodies in $\mathbb{R}^n$, centrally symmetric with respect to the origin. Then the following inequality holds
\begin{equation}
\int_{\mathbb{S}^{n-1}} \ln \left( \frac{h_K}{h_L} \right)\, d\bar{v} \geq\frac{1}{n}\ln \left( \frac{Vol (K)}{Vol (L)} \right), \label{eq:LBMI}\end{equation}
where $d{v}_L$ is the cone-volume measure of $L$ and  $ d\bar{v}_L=\frac{1}{ Vol (L)}\, d v_L$ is its normalization. \label{prop:LBMI} }

\bigskip

Our first result represents a modified logarithmic Brunn-Minkowski:

\begin{proposition}
Let $K$ and $L$ be convex bodies in $\mathbb{R}^n$ containing the origin in their interior. Then
\begin{equation}
\int_{\mathbb{S}^{n-1}} \ln \left( \frac{h_K}{h_L} \right)\, d\bar{v}_1 \geq \ln \left( \frac{V_1(L,K))}{Vol (L)} \right) \geq \frac{1}{n} \ln \left( \frac{Vol (K))}{Vol (L)} \right), \label{eq:MLBMI}
\end{equation}
where $d{v}_1$ is a mixed volume measure, namely $\ds \int_{\s} d{v}_1 =V( [n-1], 1, L, K) =:  V_1(L, K)$, and $\ds d\bar{v}_1=\frac{1}{ V_1( L,K)}\, d v_1$ is its normalization. \label{prop:MLBMI} Equality holds if and only if $K$ is homothetic to $L$.
\end{proposition}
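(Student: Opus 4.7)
The plan is to split the chain of two inequalities into independent steps: the left inequality will follow from Jensen's inequality applied to the concave function $\ln$, while the right inequality is exactly the classical Minkowski first mixed-volume inequality. A preliminary observation makes the Jensen step go through cleanly: since $V_1(L,K) = \frac{1}{n}\intl h_K\,dS_L$, where $dS_L$ is the classical surface area measure of $L$, the natural identification is $dv_1 = \frac{1}{n}\,h_K\,dS_L$. Notice that when $K=L$ this recovers the normalized cone-volume measure $d\bar v_L$ of \eqref{eq:LBMI}, so the proposition really does sit on top of the classical setup.

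For the left inequality, I would apply Jensen to the concave function $\ln$ and the positive function $h_L/h_K$ with respect to the probability measure $d\bar v_1$. The key is the cancellation: the weight $h_K$ built into $dv_1$ kills the denominator, so that
$$\intl \frac{h_L}{h_K}\, d\bar v_1 = \frac{1}{V_1(L,K)}\cdot \frac{1}{n}\intl h_L\, dS_L = \frac{Vol(L)}{V_1(L,K)}.$$
Jensen then yields $\intl \ln(h_L/h_K)\, d\bar v_1 \leq \ln\bigl(Vol(L)/V_1(L,K)\bigr)$, which is the first inequality of \eqref{eq:MLBMI} after multiplication by $-1$.

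For the right inequality, I would simply exponentiate both sides and observe that the claim is equivalent to $V_1(L,K)^n \geq Vol(L)^{n-1}\,Vol(K)$, which is Minkowski's first mixed-volume inequality; equality there holds iff $K$ is homothetic to $L$. Tracking the equality cases through the two steps then yields the stated equality condition.

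The main obstacle is essentially bookkeeping rather than substance: one must pin down $dv_1 = \frac{1}{n}h_K\,dS_L$ correctly so that the Jensen computation collapses to a ratio of volumes; once that identification is in place, both inequalities reduce to classical results, and no new machinery is needed.
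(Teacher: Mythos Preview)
Your argument is correct. For the second inequality and the equality case you do exactly what the paper does: invoke Minkowski's first inequality $V_1(L,K)^n\ge Vol(L)^{n-1}\,Vol(K)$ together with its equality characterization. For the first inequality, however, your route is more direct than the paper's primary proof. You apply Jensen's inequality for the concave function $\ln$ to $h_L/h_K$ against the probability measure $d\bar v_1$, using the cancellation $\intl (h_L/h_K)\,d\bar v_1 = Vol(L)/V_1(L,K)$. The paper instead rewrites $\intl \ln(h_K/h_L)\,dv_1=\intl (h_K/h_L)\ln(h_K/h_L)\,dv_L$, identifies $\exp\bigl[-\tfrac{n}{V_1(L,K)}\intl (h_K/h_L)\ln(h_K/h_L)\,dv_L\bigr]$ as the limit $\lim_{p\to\infty}\bigl[\tfrac{1}{V_1(L,K)}\intl (h_K/h_L)^{p/(p+n)}\,dv_L\bigr]^{p+n}$, and bounds each term of the sequence by $(Vol(L)/V_1(L,K))^n$ via H\"older's inequality with conjugate exponents $\tfrac{p+n}{p},\tfrac{p+n}{n}$. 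Your one-line Jensen step is the economical path to the same conclusion; the paper in fact notes, just after its proof, that the information-theoretic inequality $\int p\ln p\,d\nu\ge\int p\ln q\,d\nu$ (which is equivalent to your Jensen step) also recovers the first inequality. What the H\"older-limit approach buys is a visible link to the $L_p$ scale---the exponents $p/(p+n)$ are those appearing in $L_p$ mixed volumes---which fits the paper's broader $L_p$-Brunn--Minkowski framing.
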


 The second inequality of (\ref{eq:MLBMI}) follows immediately from a well-known inequality  for mixed volumes due to Minkowski and we included it due to the similarity with the conjectured one.
The main focus of this paper is on the implications of the first inequality of (\ref{eq:MLBMI}) for the lower bound of the volume product functional and for the Bourgain-Milman constant well-known from the following theorem.

\medskip

\noindent {\bf{Theorem (Bourgain-Milman)}} \cite{Bourgain} {\em There exists an absolute constant $c>0$ (thus independent of the
dimension $n$) such that, for any centrally symmetric convex body $K$,}
\begin{equation} Vol (K) \cdot Vol (K^\circ) > c^n \omega_n^2, \label{eq:BMC} \end{equation}
  {\em where $K^{\circ}$ is the polar of $K$ with respect to the origin and $\omega_n$ is the volume of the unit ball ${\mathbb{B}}_2^n$ in $\mathbb{R}^n$.}
\medskip

In this paper, we show:
\begin{theorem}
For any $K$ convex body in $\mathbb{R}^n$ containing the origin in its interior, we have \begin{equation} Vol (K)
\cdot Vol (K^\circ) > \max \left\{ evr^n(K),  evr^n(K^\circ) \right\}\, \omega_n^2,
\end{equation} where $evr(K)$ (and $evr(K^\circ)$) is the exterior volume ratio of $K$ (respectively $K^\circ$). \label{theorem:const_vr} In particular, if $K$ is symmetric about the origin,
\begin{equation} Vol (K)
\cdot Vol (K^\circ) > \left[ \frac{2^n \Gamma (\frac{n}{2}+1 )}{n!\, \pi^{n/2}}\right]\, \omega_n^2,
\end{equation}
while, for arbitrary convex bodies $K$, we have that \begin{equation} Vol (K)
\cdot Vol (K^\circ) > \left[\frac{(n+1)^{\frac{n+1}{2}} \Gamma(\frac{n}{2}+1)}{n! (n\pi)^{n/2}} \right]\, \omega_n^2.
\end{equation}
\end{theorem}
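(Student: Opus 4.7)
The plan is to apply the modified logarithmic Brunn--Minkowski inequality of Proposition~\ref{prop:MLBMI} with $L=\mathcal{E}$, where $\mathcal{E}$ is the L\"owner ellipsoid (minimum-volume circumscribed ellipsoid) of $K$. Since $K\subseteq \mathcal{E}$, polar duality gives $\mathcal{E}^{\circ}\subseteq K^{\circ}$, whence
\[ Vol(K^{\circ}) \;\geq\; Vol(\mathcal{E}^{\circ}) \;=\; \frac{\omega_n^{2}}{Vol(\mathcal{E})}, \]
using that every origin-centred ellipsoid satisfies $Vol(\mathcal{E})\cdot Vol(\mathcal{E}^{\circ})=\omega_n^{2}$. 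Multiplying by $Vol(K)$ yields $Vol(K)\cdot Vol(K^{\circ})\geq (Vol(K)/Vol(\mathcal{E}))\,\omega_n^{2}=evr^{n}(K)\,\omega_n^{2}$, with $evr(K):=(Vol(K)/Vol(\mathcal{E}))^{1/n}$. Performing the same argument on $K^{\circ}$ (relative to its own L\"owner ellipsoid) gives the analogous bound with $evr^{n}(K^{\circ})$, and I would take the maximum of the two.

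To obtain the two explicit numerical estimates I would invoke the classical reverse-isoperimetric inequalities to lower-bound $evr^{n}(K)$. For $K$ centrally symmetric, the dual form of K.~Ball's reverse isoperimetric inequality states that among symmetric convex bodies whose L\"owner ellipsoid is $B_2^n$, the cross-polytope $C_n=\{x:|x_1|+\cdots+|x_n|\leq 1\}$ minimises the volume. Hence $evr^{n}(K)\geq Vol(C_n)/\omega_n = 2^n/(n!\,\omega_n)$, which after substituting $\omega_n=\pi^{n/2}/\Gamma(\frac{n}{2}+1)$ produces the stated symmetric estimate. For general $K$ one uses the non-symmetric analogue, in which the regular simplex inscribed in $B_2^n$ (of volume $(n+1)^{(n+1)/2}/(n!\,n^{n/2})$) is the extremiser, yielding the remaining estimate.

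The main obstacle, and the place where Proposition~\ref{prop:MLBMI} becomes essential, will be upgrading the main inequality from ``$\geq$'' to the strict ``$>$'' claimed in the theorem. The polar-inclusion step by itself attains equality precisely when $K$ is an ellipsoid (so that $K=\mathcal{E}$, $evr(K)=1$, and both sides reduce to $\omega_n^{2}$). To extract the strict inequality I would appeal to the equality clause of Proposition~\ref{prop:MLBMI}: equality in (\ref{eq:MLBMI}) forces $K$ to be homothetic to $L=\mathcal{E}$, so for every non-ellipsoidal $K$ the modified log Brunn--Minkowski inequality is strict and supplies a strict refinement beyond the trivial polar-inclusion bound. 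Matching this refinement to the precise normalization of $evr(K)$ used in the reverse-isoperimetric estimates will be the last point to pin down.
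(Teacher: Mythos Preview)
Your argument is correct, but it takes a genuinely more elementary route than the paper and in fact never uses Proposition~\ref{prop:MLBMI} at all. Your bound $Vol(K)\,Vol(K^\circ)\geq evr^n(K)\,\omega_n^2$ comes purely from the polar inclusion $\mathcal{E}^\circ\subseteq K^\circ$ together with $Vol(\mathcal{E})\,Vol(\mathcal{E}^\circ)=\omega_n^2$ for centred ellipsoids. The paper instead passes through Proposition~\ref{proposition:sur}, which couples Proposition~\ref{prop:MLBMI} with a reverse H\"older inequality bringing in the affine surface area $\Omega(L)$; specializing $L$ to the L\"owner ellipsoid (linearly mapped to $\mathbb{B}_2^n$) and using $h_K\leq h_{\mathcal{E}_L}$ to bound the exponential factor then recovers the same inequality. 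Your shortcut is cleaner for this specific statement, while the paper's detour is there precisely to exhibit the theorem as a corollary of the modified log Brunn--Minkowski machinery; that machinery also yields the finer inequality~(\ref{eq:sur}) with an arbitrary $L\in\mathcal{F}_n$, of which the L\"owner case is only one instance.

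Two remarks on the strictness you worry about. First, the leading inequality of the theorem is actually an equality when $K$ is a centred ellipsoid (both sides equal $\omega_n^2$), and indeed the paper's own proof derives only ``$\geq$'' at that step; so Proposition~\ref{prop:MLBMI} will not upgrade it. Second, for the two explicit numerical bounds you do not need Proposition~\ref{prop:MLBMI} either: if $K$ is not an ellipsoid then $K\subsetneq\mathcal{E}$ already forces $Vol(K^\circ)>Vol(\mathcal{E}^\circ)$ strictly, while if $K$ is an ellipsoid then $evr^n(K)=1$ strictly exceeds the cross-polytope and simplex constants. One small caveat in your write-up: for non-symmetric $K$ the L\"owner ellipsoid need not be centred at the origin, so $Vol(\mathcal{E})\,Vol(\mathcal{E}^\circ)=\omega_n^2$ should be weakened to $\geq$ (the Santal\'o point of an ellipsoid is its centre), which still points in the right direction.
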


 The actual lower bound of the volume product is the subject of Mahler conjecture which, although supported by an impressive body of work, remains, except for some cases, still open.

\section{Results and Proofs} A convex body in $\mathbb{R}^n$ is a compact convex set in $\mathbb{R}^n$.
Let  $\ca$ be the set of convex bodies in $\mathbb{R}^n$ containing the origin in their interior.

For $K \in \ca$, we denote by $K^\circ =\{ x \in \mathbb{R}^n : x \cdot y \leq 1, \forall y \in K\}$ the polar body of $K$, where $x \cdot y$ is the standard inner product of $x$ and $y$ in $\mathbb{R}^n$.
Moreover, for $K \in \ca$, consider $h_K : \mathbb{S}^{n-1} \to \mathbb{R}_+$ the support function of $K$ defined by $$h_K(u)= \max \{ x \cdot u, \forall x \in K\},\  \ \  u \in \mathbb{R}^n,\ ||u||=1.$$

 The volume of a convex body $K$ is the $n$-dimensional Lebesgue measure of $K$ as a compact set of $\mathbb{R}^n$. We call $dS_L$ the surface area measure of $L$, $dv_L=\frac{1}{n}\,h_L dS_L$ the cone-volume measure of $L$, and $d{v}_1=\frac{1}{n}\, h_K \, dS_L$ the mixed volume measure of $K$ and $L$, as measures on $\mathbb{S}^{n-1}$, see \cite{Schneider} for a detailed discussion.

 Let ${\mathcal{F}}_n$ denote the convex bodies in $\mathbb{R}^n$ with positive continuous curvature function, and let ${\mathcal{V}}_n$ denote the convex bodies in $\mathbb{R}^n$ of elliptic type, \cite{Lei}. If $L$ is convex body in ${\mathcal{F}}_n$ with curvature function $f_L$, then $\ds \Omega (L)=\intl f_L^{\frac{n}{n+1}}\, \dm$ is the affine surface area of $L$, \cite{Lei}.

 Finally, if $K \subset {\mathbb{R}}^n$ is a convex body, the exterior volume ratio of $K$ is, by definition,
\begin{equation}
evr (K)= \left(\frac{Vol (K)}{Vol ({\mathcal{E}}_L)}\right)^{1/n}= \max_{K \subseteq {\mathcal{E}}}\left(\frac{Vol (K)}{Vol ({\mathcal{E}})}\right)^{1/n}, \label{eq:evr}
\end{equation}
where the maximum is taken after all ellipsoids containing $K$, see for example Ball's \cite{Ball} or Barthe's \cite{Barthe}. The ellipsoid ${\mathcal{E}}_L$ is called L\"owner's ellipsoid.

John's theorem \cite{John} states that, for any convex body $K$, its L\"owner's ellipsoid satisfies the inclusions $\frac{1}{n} \, {\mathcal{E}}_L \subseteq K \subseteq {\mathcal{E}}_L$, where $\frac{1}{n} \, {\mathcal{E}}_L$, generally denoted by ${\mathcal{E}}_J$, is also called John's ellipsoid, the ellipsoid of maximal volume contained in $K$.

 \medskip

 \noindent {\em{Proof of Proposition \ref{prop:MLBMI}.}} { Note that $$\ds \int_{\s}\frac{h_K}{h_L} \ln \left( \frac{h_K}{h_L} \right)\, d{v}_L = \intl \ln \fp \, dv_1.$$ Then the first claim follows from \begin{equation}
\exp \left[-\frac{n}{V_1( L, K)}\, \int_{\s}\frac{h_K}{h_L} \ln \left( \frac{h_K}{h_L} \right)\, d{v}_L \right] =\lim_{p \to \ii} \left[\frac{1}{V_1( L,K)}\,  \int_{\s}\left(\frac{h_K}{h_L} \right)^{\frac{p}{p+n}} dv_L \right]^{p+n} \nonumber
\end{equation} and H\"older's inequality
\begin{equation}
\left( \int_{\s}\left(\frac{h_K}{h_L} \right)^{\frac{p}{p+n}} dv_L \right)^{\frac{p+n}{p}} \cdot \left( \int_{\s} dv_L \right)^{-\frac{n}{p}} \leq \int_{\s} \frac{h_k}{h_L}\, dv_L= V_1( L, K), \nonumber
\end{equation}
where $\ds \int_{\s} dv_L= Vol (L)$.}

 By applying Minkowski's inequality  $V_1( L, K) \geq Vol (K)^{1/n} \cdot Vol (L)^{(n-1)/n}$ to the first inequality of Proposition \ref{prop:MLBMI}, we obtain the second inequality and that
 \begin{equation}
 \int_{\mathbb{S}^{n-1}} \ln \left( \frac{h_K}{h_L} \right)\, d\bar{v}_1 \geq \frac{1}{n} \ln \left( \frac{Vol(K))}{Vol (L)} \right).
 \end{equation}

 \hfill $\Box.$

The first inequality in (\ref{eq:MLBMI})  is a strengthened version of an inequality obtained by Gardner-Hug-Weil \cite{Gardner}.
 They showed in the larger set-up of the Orlicz-Brunn-Minkowski theory that, if $L, K \in \ca$ and $L \subseteq K$, the following inequality holds
\begin{equation}
\int_{{\mathbb{S}}^{n-1}}  \frac{h_K}{h_L}\, \ln \left( \frac{h_K}{h_L} \right)\, d\bar{v}_L \geq \frac{1}{n} \left( \frac{Vol (K)}{Vol (L)} \right)^{1/n} \ln \left( \frac{Vol (K)}{Vol (L)} \right). \label{eq:Gardner}
\end{equation}
More precisely, the last inequality follows from Lemma 9.1 of  \cite{Gardner} by considering the convex function $x \mapsto  x \ln x$.

Our first inequality in (\ref{eq:MLBMI}) can be written as \begin{equation}
\int_{{\mathbb{S}}^{n-1}}  \frac{h_K}{h_L}\, \ln \left( \frac{h_K}{h_L} \right)\, d\bar{v}_L \geq \frac{V_1(L, K)}{Vol (L)} \ln \left( \frac{V_1(L, K)}{Vol (L)} \right), \label{eq:Gardner2}
\end{equation} from which $L \subseteq K$ and Minkowski`s inequality implies (\ref{eq:Gardner}).

\medskip

In connection with the conjectured inequality, remark  the following fact from information theory: {\em
If $p, q$ are probability density functions on a measure space $(X, \nu)$, then \begin{equation} \int p \ln p \, d\nu \geq \int p \ln q \, d\nu. \end{equation}}
By taking $\ds p\, d \nu = \frac{h_L}{h_K} \cdot \frac{1}{Vol (L)}\, dv_1$ and $\ds q\, d \nu = \frac{1}{V_1(L, K)} \, dv_1$ (also switching the places of the two measures), we obtain the double inequality
\begin{equation}
\intl \ln \fp d\bar{v}_L \leq \ln \left( \frac{V_1( L, K)}{Vol (L)} \right) \leq \intl \ln \fp {d\bar{v}_1}.\label{eq:chain}
\end{equation}

\medskip

The next proposition is fundamental for our main theorems.

\begin{proposition}
For any $K \in \ca$ and any $L \in \ca \cap {\mathcal{F}}_n$, we have \begin{equation}
Vol (K) \cdot Vol (K^\circ)\geq \frac{1}{n^{n+1}}\cdot \frac{\Omega^{n+1}(L)}{Vol^{n-1} (L)} \cdot \frac{\frac{Vol (K)}{Vol (L)}}{\left[ \exp \int_{\mathbb{S}^{n-1}} \ln \left(\frac{h_K}{h_L}\right)\, d\bar{v}_1 \right]^n}, \label{eq:sur}
\end{equation} with equality if and only if $K$ and $L$ are homothetic ellipsoids centered at the origin. \label{proposition:sur}
\end{proposition}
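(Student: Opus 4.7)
The strategy is to combine Hölder's inequality (applied to the integrand defining the affine surface area) with the first inequality in Proposition \ref{prop:MLBMI}, using the identities $Vol(K^\circ)=\frac{1}{n}\int_{\s} h_K^{-n}\,\dm$ and $V_1(L,K)=\frac{1}{n}\int_{\s} h_K\,f_L\,\dm$.

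First, I would split the integrand of $\Omega(L)$ as
\[
f_L^{\frac{n}{n+1}}=\bigl(f_L\,h_K\bigr)^{\frac{n}{n+1}}\cdot h_K^{-\frac{n}{n+1}},
\]
and apply Hölder's inequality on $\s$ with conjugate exponents $\frac{n+1}{n}$ and $n+1$. This yields
\[
\Omega(L)\;\leq\;\left(\int_{\s} f_L\,h_K\,\dm\right)^{\!\frac{n}{n+1}}\!\left(\int_{\s} h_K^{-n}\,\dm\right)^{\!\frac{1}{n+1}}=\bigl(n\,V_1(L,K)\bigr)^{\frac{n}{n+1}}\bigl(n\,Vol(K^\circ)\bigr)^{\frac{1}{n+1}},
\]
which, after raising to the power $n+1$, gives the key intermediate estimate
\[
Vol(K^\circ)\;\geq\;\frac{\Omega(L)^{n+1}}{n^{n+1}\,V_1(L,K)^{n}}.
\]

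Second, I would invoke the first inequality of Proposition \ref{prop:MLBMI}, rewritten as
\[
V_1(L,K)\;\leq\;Vol(L)\,\exp\!\left(\int_{\s}\ln\fp d\bar v_1\right),
\]
and substitute this upper bound on $V_1(L,K)^{n}$ into the previous estimate. Multiplying both sides by $Vol(K)$ and rearranging produces exactly \eqref{eq:sur}.

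For the equality statement, I would argue that equality in Hölder forces $f_L\,h_K$ and $h_K^{-n}$ to be proportional on $\s$, i.e.\ $f_L=c\,h_K^{-(n+1)}$, while equality in Proposition \ref{prop:MLBMI} forces $K$ and $L$ to be homothetic. Combined, these two conditions become $f_L\,h_L^{n+1}$ constant, which is the equation of the centro-affine Minkowski problem whose only solutions in $\ca\cap\mathcal{F}_n$ are ellipsoids centered at the origin; thus $K$ and $L$ are homothetic ellipsoids centered at the origin, and the converse is immediate. The main subtlety is the equality analysis, since one must carefully invoke the characterization of solutions to the centro-affine equation $f_L\,h_L^{n+1}=\text{const}$ to conclude the ellipsoid rigidity; the two inequalities themselves are short one-line applications of classical tools.
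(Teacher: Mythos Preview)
Your proposal is correct and follows essentially the same route as the paper's proof: the paper combines the same two ingredients---the H\"older inequality linking $\Omega(L)$, $V_1(L,K)$ and $Vol(K^\circ)$ (phrased there as a reverse H\"older lower bound on $\int_{\s} h_K f_L\,\dm$, which is the same inequality rearranged), and the first inequality of Proposition~\ref{prop:MLBMI}---only in the opposite order. Your equality analysis also matches: the paper reduces to the identical condition $h_K=\lambda f_L^{-1/(n+1)}$ together with $K$ homothetic to $L$, and then invokes Petty's lemma (rather than the language of the centro-affine Minkowski problem) to conclude that $L$, and hence $K$, is an ellipsoid centered at the origin.
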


\begin{proof}
Suppose first that $K$ and $L$ are distinct. Rewrite the inequality of Proposition \ref{prop:MLBMI}, as \begin{equation}
\exp \int_{\mathbb{S}^{n-1}} \ln \left( \frac{h_K}{h_L} \right)\, d\bar{v}_1 \geq \frac{V_1(L,K)}{Vol (L)}=\frac{1}{Vol (L)} \frac{1}{n} \int_{\mathbb{S}^{n-1}}h_K f_L \, \dm, \nonumber
\end{equation} where $f_L$ is the curvature function of $\partial L$ as a function on the unit sphere. Since \begin{equation} \int_{\mathbb{S}^{n-1}}h_K f_L \, \dm \geq \left(\intl h_K^{-n}\dm\right)^{-\frac{1}{n}}\left(\intl f_L^{\frac{n}{n+1}}\dm \right)^{\frac{n+1}{n}} \label{eq:rH} \end{equation} we obtain
\begin{equation}\exp \int_{\mathbb{S}^{n-1}} \ln \left( \frac{h_K}{h_L} \right)\, d\bar{v}_1 \geq \frac{(\Omega (L))^{\frac{n+1}{n}}}{ n^{1+\frac{1}{n}}Vol (L)} \left[\frac{1}{n}\intl \rho_{K^\circ}^{n}\dm\right]^{-\frac{1}{n}},
\end{equation} where $\Omega (L)$ is the affine surface area of $L$ mentioned earlier.
Thus,  by raising both sides of the previous inequality to the power $n$, we have \begin{equation}\left[\exp \int_{\mathbb{S}^{n-1}} \ln \left( \frac{h_K}{h_L} \right)\, d\bar{v}_1\right]^n \geq \frac{(\Omega^{n+1} (L))}{ n^{n+1}Vol^n (L)} \frac{1}{Vol (K^{\circ})},
\end{equation} from which a rearrangement of terms yields the claim.

If $K=L \in \ca \cap {\mathcal{F}}_n$, the claim follows directly from the reverse H\"older inequality (\ref{eq:rH}).

Finally, note that the equality holds in (\ref{eq:rH}) if and only if $L$ is of elliptic type and $K$ is its curvature image body (up to a constant), $h_K=\lambda f_L^{-\frac{1}{n+1}}$, for some positive constant $\lambda$. As, in addition, equality holds in Proposition \ref{prop:MLBMI} if and only if $K$ is homothetic to $L$, we conclude, due to Petty's lemma, that equality holds in (\ref{eq:sur}) if and only if $K$ and $L$ are homothetic ellipsoids centered at the origin.

\end{proof}

\medskip

\noindent {\em Proof of Theorem 1.1.} Since the left-hand side is invariant under linear transformations, apply to $K$ the linear transformation which transforms its L\"owner's ellipsoid into the unit ball ${\mathbb{B}}_2^n$. For simplicity, keep the same notation for $K$ and apply Proposition \ref{proposition:sur} to $K$ and ${\mathcal{E}}_L$, obtaining, since $h_K \leq h_{{\mathcal{E}}_L}$ in all directions,
\begin{equation}
Vol (K) \cdot Vol (K^\circ)\geq evr^n(K)\cdot \omega^2_n.
\end{equation}

Of course, the role of $K$ and $K^\circ$ can be interchanged and we also have $Vol (K) \cdot Vol (K^\circ)\geq evr^n(K^\circ)\cdot \omega^2_n$. Therefore, as remarked to us by V. Milman, it is desirable to write explictly the maximum of the two exterior volume ratios as, for example, for the cross-polytope the exterior volume ratio is extremely small, while for the cube is uniformly bounded from below.

The second part of the claim follows now from the following results due to Barthe, \cite{Barthe}, see also Ball \cite{Ball}:
For any convex body $K \subset {\mathbb{R}}^n$, we have $evr(K) \geq evr (\Delta_n)$, where $\Delta_n$ is the regular simplex of ${\mathbb{R}}^n$ inscribed in the Euclidean ball ${\mathbb{B}}_2^n$. On the other hand, for any convex body $K \subset {\mathbb{R}}^n$ symmetric with respect to the origin, we have $evr(K) \geq evr ({\mathbb{B}}_1^n)$, where ${\mathbb{B}}_1^n$ is here the $L^1$ ball inscribed in the Euclidean ball, for which the calculations follow immediately.

In the case of the simplex, the calculations for the exterior volume ratio are in Satz 4 of \cite{Jun} as pointed out to the author by S. Taschuk.

\hfill $\Box$

\medskip

The next corollary is a special case of Proposition \ref{proposition:sur} which we feel justified to state  separately.

\begin{corollary}
For any $K$ convex body in $\ca \cap {\mathcal{F}}_n$, we have \begin{equation}
Vol (K) \cdot Vol (K^\circ)\geq  \frac{1}{n^{n+1}}\cdot \frac{\Omega^{n+1}(K)}{Vol^{n-1} (K)},
\end{equation} with equality if and only if $K$ is a centered ellipsoid. \label{corollary:main}
\end{corollary}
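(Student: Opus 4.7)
The plan is to apply Proposition \ref{proposition:sur} in the degenerate case $L=K$, which is admissible because $K \in \ca \cap {\mathcal{F}}_n$. First I would observe that with $L=K$ the integrand $\ln(h_K/h_L)$ vanishes identically on $\s$, so $\exp\int_{\s}\ln(h_K/h_L)\, d\bar{v}_1 = 1$, and the factor $Vol(K)/Vol(L)$ appearing in the right-hand side of (\ref{eq:sur}) is also $1$. The right-hand side of (\ref{eq:sur}) therefore collapses to
\[
\frac{1}{n^{n+1}} \cdot \frac{\Omega^{n+1}(K)}{Vol^{n-1}(K)},
\]
which is precisely the claimed lower bound on $Vol(K)\cdot Vol(K^\circ)$.

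For the equality case, I would invoke the equality statement of Proposition \ref{proposition:sur}: equality in (\ref{eq:sur}) holds if and only if $K$ and $L$ are homothetic ellipsoids centered at the origin. Since here $L=K$, the homothety is automatic and the condition reduces to the statement that $K$ itself is a centered ellipsoid. Alternatively, one can extract the equality case directly from the proof of Proposition \ref{proposition:sur}: when $K=L$ the modified logarithmic Brunn-Minkowski step is trivially an equality, so the only genuine inequality used is the reverse H\"older inequality (\ref{eq:rH}) evaluated at $f_L = f_K$. Its equality case forces $h_K^{-n}$ to be proportional to $f_K^{n/(n+1)}$, i.e.\ $h_K = \lambda\, f_K^{-1/(n+1)}$ for some $\lambda > 0$, and by Petty's lemma this characterizes origin-centered ellipsoids, matching the conclusion.

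No significant obstacle arises, since the corollary is a clean specialization of Proposition \ref{proposition:sur}. The only point worth checking explicitly is that the hypothesis $K \in \ca \cap {\mathcal{F}}_n$ (rather than merely $K \in \ca$) is precisely what permits the substitution $L = K$ in Proposition \ref{proposition:sur}, whose hypothesis requires $L \in \ca \cap {\mathcal{F}}_n$ in order for the curvature function $f_L$ and the affine surface area $\Omega(L)$ to be defined in the form used there.
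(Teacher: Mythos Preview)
Your proposal is correct and matches the paper's approach exactly: the paper presents this corollary simply as the special case $L=K$ of Proposition~\ref{proposition:sur}, and indeed the proof of that proposition already isolates the case $K=L$ and reduces it to the reverse H\"older inequality~(\ref{eq:rH}), just as you describe. Your remark that the hypothesis $K\in\ca\cap\mathcal{F}_n$ is precisely what licenses the substitution $L=K$ is also on point.
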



\

Note that Corollary \ref{corollary:main}, together with Santal\'o inequality, implies  the famous affine isoperimetric inequality  for bodies in $\ca \cap {\mathcal{F}}_n$. The  affine isoperimetric inequality states that {\em for any convex body} $K$
\begin{equation}\frac{\Omega^{n+1}(K)}{Vol^{n-1} (K)} \leq n^{n+1} \omega_n^2, \ \ \ {\hbox{where}}\ \ \omega_n=Vol (\mathbb{B}^{2}_n),\end{equation}
see for example \cite{Lei}.
\hfill $\Box.$

\medskip

We will use now Proposition \ref{proposition:sur} for the special case of $L$ the unit ball in $\mathbb{R}^n$. As before,  $\omega_n$ stands for the volume of the unit ball in $\mathbb{R}^n$, and $\ds w(K)=\intl h_K\, d\mu_{\mathbb{S}^{n-1}}$ is, up to the factor of $1/2$, the mean width of $K$. By using the concavity of $x \mapsto \ln x, \ x>0$, and the fact that the volume product is invariant under (special) linear transformations, we obtain:

\begin{corollary}
For any $K$ convex body in $\ca$, we have \begin{equation}
Vol (K) \cdot Vol (K^\circ)\geq
 \omega_n \cdot \max_{T \in SL(n)}\frac{Vol (K) w^n(TK)}{\left( \int_{\mathbb{S}^{n-1}}  h^2_{TK}\, d\mu_{\mathbb{S}^{n-1}}\right)^n}, \label{eq:max}
\end{equation} where $K$ and $K^\circ$ can be interchanged.  Equality occurs above if and only if $K$ is an ellipsoid centered at the origin.\\
 Thus, for any $K \in \ca$, \begin{equation}
Vol (K) \cdot Vol (K^\circ)\geq \omega_n \min_{L \in \ca} \max_{T \in SL(n)}\frac{Vol (L) w^n(TL)}{ (\int_{\mathbb{S}^{n-1}}  h^2_{TL}\, d\mu_{\mathbb{S}^{n-1}})^n}.
\end{equation} \label{cor:M}
\end{corollary}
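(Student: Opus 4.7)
The plan is to specialize Proposition \ref{proposition:sur} to the case $L = \mathbb{B}_2^n$ and then apply Jensen's inequality to the concave function $\ln$. With $L$ the Euclidean unit ball one has $h_L \equiv 1$, $f_L \equiv 1$, $Vol(L) = \omega_n$, and $\Omega(L) = n\omega_n$, while the mixed volume measure reduces to $d v_1 = \frac{1}{n}\, h_K \, d\mu_{\mathbb{S}^{n-1}}$, so that $V_1(L,K) = w(K)/n$ and $d\bar{v}_1 = h_K \, d\mu_{\mathbb{S}^{n-1}} / w(K)$. Substituting these into (\ref{eq:sur}), the factor $n^{n+1}$ in the denominator cancels against the $n^{n+1}$ coming from $\Omega^{n+1}(L)$, and the powers of $\omega_n$ collapse to $\omega_n^2$, leaving
\[
Vol(K) \cdot Vol(K^\circ) \geq \omega_n \cdot \frac{Vol(K)}{\left[\exp \int_{\mathbb{S}^{n-1}} \ln h_K \, d\bar{v}_1\right]^n}.
\]

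Since $d\bar{v}_1$ is a probability measure, Jensen's inequality gives $\exp \int \ln h_K \, d\bar{v}_1 \leq \int h_K \, d\bar{v}_1 = w(K)^{-1} \int_{\mathbb{S}^{n-1}} h_K^2 \, d\mu_{\mathbb{S}^{n-1}}$. Raising to the $n$-th power and inserting into the previous display yields
\[
Vol(K) \cdot Vol(K^\circ) \geq \omega_n \cdot \frac{Vol(K)\, w^n(K)}{\left(\int_{\mathbb{S}^{n-1}} h_K^2 \, d\mu_{\mathbb{S}^{n-1}}\right)^n}.
\]
To upgrade this to the first displayed inequality of the corollary, I would invoke the $SL(n)$-invariance of the volume product: applying the bound just derived to $TK$ in place of $K$ for arbitrary $T \in SL(n)$ preserves the left-hand side, so one may take the supremum of the right-hand side over $T$. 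The statement with the roles of $K$ and $K^\circ$ interchanged follows automatically from the symmetry $Vol(K) \cdot Vol(K^\circ) = Vol(K^\circ) \cdot Vol((K^\circ)^\circ)$.

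For the equality characterization, I would trace back both ingredients: Proposition \ref{proposition:sur} forces $K$ to be homothetic to $L = \mathbb{B}_2^n$ and centered---i.e., a centered Euclidean ball---while Jensen's inequality forces $h_K$ to be constant on $\mathbb{S}^{n-1}$, which is the same condition; after incorporating the $SL(n)$-maximum this amounts to $K$ being a centered ellipsoid. The ``Thus'' inequality is an immediate consequence: the first inequality holds for every $K \in \ca$, and choosing $L = K$ in the minimum on the right-hand side of the second inequality shows that $\min_{L} \max_{T} Vol(L) w^n(TL)/(\int h_{TL}^2 d\mu_{\mathbb{S}^{n-1}})^n$ is no larger than the value at $L = K$, which is in turn bounded above by $Vol(K) \cdot Vol(K^\circ)/\omega_n$. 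Since every step is dictated by the hint preceding the statement, I anticipate no conceptual obstacle; the care required lies in tracking the constants through the cancellations and checking that the Jensen equality condition aligns with the equality condition of Proposition \ref{proposition:sur}.
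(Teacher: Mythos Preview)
Your argument is correct and matches the paper's approach exactly: specialize Proposition~\ref{proposition:sur} to $L=\mathbb{B}_2^n$, apply Jensen's inequality for the concave logarithm to the probability measure $d\bar v_1 = h_K\,d\mu_{\mathbb{S}^{n-1}}/w(K)$, and then exploit the $SL(n)$-invariance of the volume product. The constant tracking, the equality analysis, and the derivation of the ``Thus'' inequality are all accurate.
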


\hfill $\Box.$
\medskip

It is easy to check that if $\{K_j\}_j$ is a sequence of centered ellipsoids with equal volume whose major axis goes to infinity as $j \nearrow +\infty$, then $\ds \frac{Vol (K_j) w^n(K_j)}{\left( \int_{\mathbb{S}^{n-1}}  h^2_{K_j}\, d\mu_{\mathbb{S}^{n-1}}\right)^n} \searrow 0$. Thus, taking the maximum after all $SL(n)$ transformations in (\ref{eq:max}) is needed.  In fact, we believe that, for a given convex body $K$ in $\ca$, the optimal value $\ds M(K):=  \max_{T \in SL(n)}\frac{Vol (K) w^n(TK)}{\left( \int_{\mathbb{S}^{n-1}}  h^2_{TK}\, d\mu_{\mathbb{S}^{n-1}}\right)^n}$ is reached for some isotropic position of $K$ in the sense of Giannopoulos-Milman, \cite{GM}.

\medskip

\end{document}